\newcommand{\norm}[1]{\left\lVert#1\right\rVert}
\newtheorem{theorem}{Theorem}[section]
\newtheorem{proposition}[theorem]{Proposition}
\newtheorem{lemma}[theorem]{Lemma}
\theoremstyle{definition}
\theoremstyle{remark}
\theoremstyle{remark}
\newtheorem*{Littlewood}{Littlewood's subordination principle}
\theoremstyle{remark}
\newtheorem*{Koebe}{Koebe's growth and distortion theorem}
\renewcommand\Re{\operatorname{Re}}
\renewcommand\Im{\operatorname{Im}}
\author{Alexandru Aleman}
\address{Lund University, Mathematics, Faculty of Science, P.O. Box 118, S-221 00 Lund, Sweden}
\email{alexandru.aleman@math.lu.se}
\author{Athanasios Kouroupis}
\address{Department of mathematics, KU Leuven, Celestijnenlaan 200B, 3001, Leuven, Belgium}
\email{athanasios.kouroupis@kuleuven.be}
\title{Brennan's conjecture holds for semigroups of holomorphic functions}
\date{}
\begin{document}

\begin{abstract}
In the present note, we give a short proof of Brennan's conjecture in the special case of continuous semigroups of holomorphic functions. We apply classical techniques of complex analysis in conjunction with recent results on B\'{e}koll\'{e}-Bonami weights and spectra of integration operators.
\end{abstract}
\maketitle
\section{Introduction}
J. E. Brennan in \cite{BRE78} conjectured that if $f:\mathbb{D}\mapsto \mathbb{C}$ is a conformal map. Then, for every $p\in(-2,\frac{2}{3})$ the $p$-integral mean of the derivative is finite:
\begin{equation}\label{eq:brennan}
\int\limits_{\mathbb{D}}\left|f'(w)\right|^p\,dA(w)<\infty,
\end{equation}
where $dA(z)=\frac{dx\,dxy}{\pi}$ is the normalized Lebesgue measure of the unit disk $\mathbb{D}$.

Brennan's conjecture is one of the most famous remaining open problems in the field of geometric function theory. The best result so far is due to Alan Sola \cite{alan2, alan}, where he proved that the conjecture holds for the values $p\in(-p_0,\frac{2}{3}),\,p_0\sim 1.76$. We refer the interested reader also to the article of Hedenmalm and Shimorin \cite{HS05}. For further examples of univalent functions satisfying the conjecture and connections with complex dynamics, we refer to \cite{Volberg98}.      

In the present note, we will restrict ourselves to conformal maps $f$ that can be embedded into a continuous semigroup of holomorphic functions in the unit disk.
\begin{theorem}\label{main}
Let $\{\phi_t(z)\}_{t\geq 0}$ be a continuous semigroup of conformal self-maps of the unit disk. Then, for every $p\in(-2,\frac{2}{3})$
\begin{equation}\label{eq:sembrennan}
\int\limits_{\mathbb{D}}\left|\phi_t'(w)\right|^p\,dA(w)<\infty.
\end{equation}
\end{theorem}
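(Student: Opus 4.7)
My plan is to split the range of $p$ into two parts. For $p\in[0,\tfrac{2}{3})$, the estimate follows immediately from the Schwarz--Pick inequality: since $\phi_t\colon\mathbb{D}\to\mathbb{D}$ is holomorphic,
\[
|\phi_t'(z)|\le\frac{1-|\phi_t(z)|^2}{1-|z|^2}\le\frac{1}{1-|z|^2},
\]
hence $|\phi_t'(z)|^p\le (1-|z|^2)^{-p}$, which is integrable over $\mathbb{D}$ for every $p<1$. The genuine content of the theorem therefore lies in the range $p\in(-2,0)$, where $|\phi_t'|^p$ can blow up at boundary points where $\phi_t'$ is small.

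For $p\in(-2,0)$ I would use the infinitesimal structure of the semigroup. Let $G$ denote the Berkson--Porta generator, written as $G(z)=(\overline{\tau}z-1)(z-\tau)P(z)$ with $\operatorname{Re} P\ge 0$ and $\tau\in\overline{\mathbb{D}}$ the Denjoy--Wolff point. Differentiating the flow equation $\partial_t\phi_t=G\circ\phi_t$ in the spatial variable yields
\[
\phi_t'(z)=\exp\!\left(\int_0^t G'(\phi_s(z))\,ds\right),
\]
so $|\phi_t'|^p$ is an exponential of an orbit integral of $\operatorname{Re} G'$. I would then regard $w=|\phi_t'|^p$ as a candidate B\'ekoll\'e--Bonami weight and invoke the recent spectral description of the integration operator $T_G f(z)=\int_0^z f(\zeta) G'(\zeta)\,d\zeta$ on a weighted Bergman space $A^q_\alpha$: the resolvent of $T_G$, and hence the norm of the semigroup it generates, is controlled jointly by the Berkson--Porta data of $G$ and by the B\'ekoll\'e--Bonami constants of $w$. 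Evaluating the resulting bounded operator on the constant function~$1$ then produces the desired finiteness of $\int_\mathbb{D}|\phi_t'|^p\,dA$.

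The main obstacle will be verifying that $w=|\phi_t'|^p$ belongs to the appropriate B\'ekoll\'e--Bonami class throughout $p\in(-2,0)$, and in particular that one can push $p\to -2^+$ without losing integrability. The two classical ingredients enter precisely at this step: Koebe's growth and distortion theorem converts the pointwise bounds on $\phi_t$ and $\phi_t'$ into estimates on Carleson boxes, and Littlewood's subordination principle reduces the averaged B\'ekoll\'e--Bonami inequality on those boxes to a pointwise estimate on the generator~$G$. The endpoint $p=-2$ is exactly where the spectral bounds for $T_G$ cease to be available, which is structurally consistent with the sharp threshold in Brennan's conjecture.
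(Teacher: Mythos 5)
Your reduction for $p\in[0,\tfrac23)$ via Schwarz--Pick is correct (and in fact covers all $p\in[0,1)$), but the part that carries the actual content of the theorem, $p\in(-2,0)$, is a plan rather than a proof, and the mechanism you propose does not close. First, the operator-theoretic step is not functional as stated: if the ``resulting bounded operator'' is the composition semigroup, then applying it to the constant function $1$ returns $1$ and yields no information about $\int_{\mathbb{D}}|\phi_t'|^p\,dA$; if it is the resolvent of $T_G$, there is no identity linking $(\lambda-T_G)^{-1}1$ to that integral. What is missing is the algebraic identity that makes the whole strategy work in the paper: from $h\circ\phi_t=\psi_t\circ h$ with $\psi_t$ affine one gets $|\phi_t'|^p=C(t,p)\,|h'\circ\phi_t|^{|p|}\,|h'|^{p}$, hence $\int_{\mathbb{D}}|\phi_t'|^p\,dA=C\,\norm{C_{\phi_t}h'}^{|p|}_{L^{|p|}(\omega)}$ with $\omega=|h'|^p$, which reduces the theorem to boundedness of $C_{\phi_t}$ on $L^{|p|}_\alpha(\omega)$ (and, via the $p$-independence of Carleson measures for B\'ekoll\'e--Bonami weights, to boundedness on $L^2_\alpha(\omega)$). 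Second, your choice of symbol is wrong: the generator $G=(\tau-z)(1-\overline{\tau}z)P$ with $\Re P\ge0$ is in general not a Bloch function (take $P(z)=(1+z)/(1-z)$), so $T_G$ need not even be bounded on Bergman spaces and the spectral theorem you want to invoke does not apply to it. The weight that must be shown to be B\'ekoll\'e--Bonami is $|h'|^p$ for the Koenigs map $h$, and the integration-operator symbol used for that purpose is $\log(h/z)$, whose derivative is a Cauchy transform of a measure on $\mathbb{T}$ by the Herglotz representation of spirallike maps -- none of which is supplied by your appeal to Koebe and Littlewood on Carleson boxes.

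The second, independent gap is that nothing in your outline explains where the threshold $p>-2$ comes from. In the paper it enters through a pointwise distortion inequality, proved by a monotonicity computation along a L\"owner chain (Shimorin's technique): for conformal self-maps and $|p|<2$,
\begin{equation*}
\left|\phi_t'(z)\right|^{|p|}\;\geq\; C\left(\frac{1-|z|^2}{1-|\phi_t(z)|^2}\right)^{2},
\end{equation*}
and it is exactly this estimate, combined with the Littlewood--Paley formula for B\'ekoll\'e--Bonami weights, that makes $C_{\phi_t}$ bounded on $L^2_\alpha(|h'|^p)$. Your remark that ``$p=-2$ is where the spectral bounds for $T_G$ cease to be available'' is an assertion, not an argument, and no step of your plan produces an inequality of this type. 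Without (a) the conjugation identity reducing to a composition operator on $L^2_\alpha(|h'|^p)$, (b) the verification that $|h'|^p\in B_q$, and (c) the L\"owner-chain distortion estimate, the proposal does not prove the theorem.
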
 
A (continuous) semigroup of holomorphic functions, \cite{BCDM20}, is a family $\{\phi_t(z)\}_{t\geq 0}$ of conformal self-maps of the unit disk, such that:
\begin{enumerate}[(i)]
	\item $\phi_0(z)=z,\,z\in\mathbb{D}$.
	\item $\phi_{t+s}(z)=\phi_t\circ\phi_s(z),\,z\in\mathbb{D}$.
	\item For every $z\in\mathbb{D}$, the map $t\mapsto \phi_t(z)$ is continuous for $t\geq0$.
\end{enumerate}
\subsection*{Motivation}  Bertilsson, see \cite{BER99,HH2002}, proved that Brennan's conjecture is equivalent to the following estimate:
\[\int_0^{2\pi}|f'(re^{i\theta})|^{-2}\,d\theta\leq C\frac{|f'(0)|^{-2}}{1-r},\qquad f\text{ conformal},\qquad r\in(0,1),\]
where $C$ is an absolute constant. By a breakthrough result of N. Makarov the validity of Brennan's conjecture for bounded maps would imply the conjecture in its full generality, see \cite{carleson1994some,makarov1999fine}. Thus, for a potential proof of the conjecture it is sufficient to work only with slit-maps or their corresponding Löwner chains, \cite{GOL69}. But, what if we give a Löwner chain an extra additive structure?

This was exactly the reason why we are considering semigroups of holomorphic functions.
Note that a semigroup with Denjoy-Wolff point at the origin behaves like a Löwner chain, meaning that it  satisfies the differential equation
\[\frac{d\phi_t(z)}{dt}=-\phi_t(z)p(\phi_t(z)),\qquad\Re p(z)\geq 0,\]
and the following subordination relation
\[\phi_s\prec\phi_t,\qquad 0\leq t\leq s.\]

 The theory of semigroups in the unit disk has gained popularity the last years, \cite{betsakos2024monotonicity, bracci2023complete, contreras2024slope,contreras2023centralizers, contreras2023uniform, gumenyuk2024angular, kourou2024rates,zarvalis2023quasi}. It will be interesting to see if one can work towards Brennan's conjecture having as a starting point the result of this note and establishing semigroup-fication (\cite{bracci2020semigroup}) or approximation theorems between conformal functions and semigroups.

\subsection*{Acknowledgments}
The authors thank Manuel Contreras, Alan Sola and Georgios Stylogiannis for their helpful comments and discussions.

A. Kouroupis received support from Research Foundation--Flanders (FWO), Odysseus Grant No. G0DDD23N

\subsection*{Notation}
Throughout the paper, if $f,g: E\to \mathbb R$ are two functions defined on the same set $E$, the notation $f\lesssim  g$ will mean that there exists some constant $C > 0$ such that $f \leq Cg$ on $E$. We will write  $f \sim g$ when $f\lesssim  g$ and $g\lesssim  f$.
\section{Background Material}
\subsection{Holomorphic Semigroups and conformal functions}
We will make a short presentation on the topic and we refer the interested reader to \cite{BCDM20} for a thorough introduction and an overview of recent developments.

We recall that a holomorphic self-map of the unit disk $\phi$, which is not the identity, satisfies one of the following:
\begin{enumerate}[(i)]
\item $\phi$ has a unique fixed point $\tau$ in $\mathbb{D}$.\\
\item $\phi$ has no fixed point in $\mathbb{D}$ and there exists a unique point $\tau$ in the unit circle $\mathbb{T}$ such that
\[\angle\lim\limits_{z\rightarrow\tau}\phi(z)=\tau\qquad \text{and} \qquad \phi'(\tau):=\angle\lim\limits_{z\rightarrow\tau}\phi'(z)=\liminf_{z\rightarrow\tau}\frac{1-|\phi(z)|}{1-|z|}\leq 1.\]
\end{enumerate}
The point $\tau\in\overline{\mathbb{D}}$ is called the Denjoy-Wolff point of $\phi$. If $\tau\in\mathbb{D}$ then the map $\phi$ is called elliptic, if $\tau\in\mathbb{T}$ and $\phi'(\tau)=1$ it is called parabolic and finally if $\tau\in\mathbb{T}$ and $\phi'(\tau)<1$ it is called hyperbolic. 

If $\{\phi_t(z)\}_{t\geq 0}$ is a continuous non-trivial semigroup in $\mathbb{D}$ then all iterates different from the identity have the same Denjoy-Wolff point $\tau\in\overline{\mathbb{D}}$. Furthermore, if $\tau\in\mathbb{T}$ then
\[\phi_t'(\tau)=e^{-\lambda t},\qquad \Re \lambda\geq 0.\]
The semigroup $\{\phi_t(z)\}_{t\geq 0}$ is called elliptic, parabolic or hyperbolic if $\phi_1$ is elliptic, parabolic or hyperbolic, respectively. From now on we will assume that every semigroup is non-trivial and continuous.

For every semigroup $\{\phi_t(z)\}_{t\geq 0}$ of the unit disk there exists a conformal map $h(z)$ in the unit disk $\mathbb{D}$, the Koenings map, such that:
\begin{equation}
\phi_t(z)=h^{-1}\circ\psi_t\circ h(z),\qquad z\in \mathbb{D},
\end{equation}
where
\begin{enumerate}[(i)]
\item $\psi_t(z)= e^{-\lambda t}z,\, \Re\lambda\geq0$, when $\{\phi_t(z)\}_{t\geq 0}$ is an elliptic semigroup.\label{elliptic}
\item $\psi_t(z)= z+it,$ when $\{\phi_t(z)\}_{t\geq 0}$ is a non-elliptic semigroup (parabolic or hyperbolic).
\end{enumerate}
In the first case, the Koenig map $h$ is a $\lambda$-spirallike function with respect to $\tau$, that is 
\begin{equation}\label{koenigel}
\Re \left(\frac{1}{\lambda}(z-\tau)(1-\overline{\tau}z)\frac{h'(z)}{h(z)}\right)\geq 0.
\end{equation}

An equivalent geometric definition of a $\lambda$-spirallike function $h$ with respect to $\tau\in\mathbb{D}$ is the following:
\begin{enumerate}[(i)]
	\item The function $h$ is conformal in the unit disk and $h(\tau)=0$.
	\item If a point $z$ exists in the range $\Omega:=h(\mathbb{D})$ of the conformal map $h$, then the $\lambda$- spiral connecting $z$ and $0$ is a subset of $\Omega$.
	\[e^{-\lambda t}z\in\Omega,\qquad t\geq 0,\qquad z\in\Omega.\]

\end{enumerate}

If $\lambda\in(0,\infty)$ then the map $h$ is called starlike.

In the non-elliptic case, $h$ is starlike at infinity with respect to $\tau\in\mathbb{T}$:
\begin{equation}\label{koenignel}
\Im \left(\overline{\tau}(\tau-z)^2h'(z)\right)\geq 0,
\end{equation}
or equivalently
\begin{enumerate}[(i)]
	\item The function $h$ is conformal in the unit disk and $\limsup\limits_{z\rightarrow \tau}\Im h(z)=+\infty$.
	\item If a point $z$ exists in the range $\Omega:=h(\mathbb{D})$ of the conformal map $h$, then for every $t\geq 0$
	\[z+it\in\Omega.\]
	
\end{enumerate}
Our starting point was the observation that Brennan's conjecture holds for the Koening map of a semigroup. To prove this result, which is well known to the experts in this field, first we need to recall some classical theorems.
\begin{Littlewood}[\cite{SHAP93}]
Let $f$ be a holomorphic and $g$ be a conformal function in the unit disk with $f(\mathbb{D})\subset g(\mathbb{D})$ and $f(0)=g(0)$. Then for every $p>0$ and $r\in(0,1)$
\[\int\limits_0^{2\pi}|f(re^{i\theta})|^p\,d\theta\leq \int\limits_0^{2\pi}|g(re^{i\theta})|^p\,d\theta.\]
\end{Littlewood}

\begin{Koebe}[\cite{POM75}]
Let $f$ be a conformal map in the unit disk with $f(0)=0$ and $f'(0)=1$. Then
\begin{enumerate}[(i)]
\item \[\frac{|z|}{(1+|z|)^2}\leq |f(z)|\leq \frac{|z|}{(1-|z|)^2}.\]
\item \[\frac{1-|z|}{(1+|z|)^3}\leq |f'(z)|\leq \frac{1+|z|}{(1-|z|)^3}.\]
\item \[\frac{1-|z|}{1+|z|}\leq \frac{|zf'(z)|}{|f(z)|}\leq \frac{1+|z|}{1-|z|}.\]
\end{enumerate}
\end{Koebe}
A direct consequence of the Hardy-Stein identity is the following result for the integral means of a conformal map:
\begin{theorem}[\cite{POM92}]\label{integralmeans}
If $f$ is a conformal function in the unit disk and $f(z)=O\left((1-|z|)^{-a}\right),\, a\in (0,2]$ as $|z|\rightarrow 1^-$. Then for every $p>\frac{1}{a}$
\[\int\limits_0^{2\pi}|f(re^{i\theta})|^p\,d\theta= O\left(\frac{1}{(1-r)^{ap-1}}\right),\qquad r\rightarrow 1^-.\]
For $0<p<\frac{1}{a}$ the associated $p$-integral means are uniformly bounded.
\end{theorem}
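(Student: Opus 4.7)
The plan is to combine the Hardy--Stein identity for the subharmonic function $|f|^{p}$ with the area-type change of variables available thanks to univalence. Set
\[I_{p}(r):=\int_{0}^{2\pi}|f(re^{i\theta})|^{p}\,d\theta.\]
Since $|f|^{p}$ is subharmonic on $\mathbb D$ and $\Delta|f|^{p}=p^{2}|f|^{p-2}|f'|^{2}$ wherever $f\ne 0$, Green's identity on $D_{r}:=\{|z|<r\}$ yields the Hardy--Stein formula
\[rI_{p}'(r)=p^{2}\int_{D_{r}}|f(z)|^{p-2}|f'(z)|^{2}\,dA(z),\]
where $dA$ is Lebesgue area measure. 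So everything boils down to estimating the area-type integral on the right.

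Here the conformality of $f$ is decisive. The change of variable $w=f(z)$, whose Jacobian equals $|f'(z)|^{2}$, converts the integral into an integral on the image domain:
\[\int_{D_{r}}|f(z)|^{p-2}|f'(z)|^{2}\,dA(z)=\int_{f(D_{r})}|w|^{p-2}\,dA(w).\]
The growth hypothesis $|f(z)|=O((1-|z|)^{-a})$ implies $f(D_{r})\subset\{|w|\le C(1-r)^{-a}\}$, and since $p>0$ we can estimate the right-hand side by integrating over the whole disk of that radius:
\[\int_{f(D_{r})}|w|^{p-2}\,dA(w)\le\int_{|w|\le C(1-r)^{-a}}|w|^{p-2}\,dA(w)=\frac{2\pi}{p}\bigl(C(1-r)^{-a}\bigr)^{p}\lesssim(1-r)^{-ap}.\]
Combining the two previous displays gives $rI_{p}'(r)\lesssim(1-r)^{-ap}$, and integrating from a fixed $r_{0}\in(0,1)$ to $r$ separates into the two regimes: for $p>1/a$ one has $I_{p}(r)-I_{p}(r_{0})\lesssim(1-r)^{1-ap}$, which is the desired $O((1-r)^{-(ap-1)})$; for $0<p<1/a$ the same primitive stays bounded, giving uniform boundedness of the integral means.

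The point that I expect to require the most care is the zero set of $f$: a univalent function has at most one zero, but when $p<2$ the integrand $|f|^{p-2}|f'|^{2}$ is singular there and the pointwise identity $\Delta|f|^{p}=p^{2}|f|^{p-2}|f'|^{2}$ must be justified in the distributional sense. The clean way around this is either to precompose $f$ with a disk automorphism that moves the (possible) zero off the origin and apply Hardy--Stein on an annulus $\{r_{0}<|z|<r\}$ avoiding it, or to approximate by the nonvanishing functions $f+\varepsilon$ and let $\varepsilon\to 0^{+}$; the limit is permissible because the image-side integral $\int_{f(D_{r})}|w|^{p-2}\,dA(w)$ converges at $w=0$ precisely when $p>0$, which is our standing assumption. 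All other steps are direct applications of Green's theorem, univalent change of variables, and elementary integration.
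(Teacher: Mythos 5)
Your proof is correct and follows exactly the route the paper indicates: the result is cited from Pommerenke and described as "a direct consequence of the Hardy--Stein identity," and your argument (Hardy--Stein formula, univalent change of variables $w=f(z)$ to reduce to $\int_{f(D_r)}|w|^{p-2}\,dA(w)$, the growth bound on the image, and integration in $r$ in the two regimes $ap>1$ and $ap<1$) is precisely the standard Prawitz-type proof behind that citation. Your handling of the possible simple zero of $f$ for $0<p<2$ is also sound, since the distributional identity $\Delta|f|^{p}=p^{2}|f|^{p-2}|f'|^{2}$ holds with no singular part whenever $p>0$.
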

A well-known result of this theory is that the conjecture holds for the classes of close-to-convex and starlike functions. For these results, we refer to the original paper by J. E. Brennan \cite{BRE78} in which they are attributed to B. Dahlberg and J. Lewis
\begin{proposition}\label{BrennanKoenig}
Let $\{\phi_t(z)\}_{t\geq 0}$ be a non-trivial continuous semigroup of conformal self-maps of the unit disk and $h$ be the associated Koenigs map. Then, for every $p\in(-2,\frac{2}{3})$
\begin{equation}
\int\limits_{\mathbb{D}}\left|h'(w)\right|^p\,dA(w)<\infty.
\end{equation}
\end{proposition}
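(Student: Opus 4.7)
My plan is to split along the Koenigs classification \eqref{koenigel}--\eqref{koenignel}. Post-composing $h$ with a Möbius transformation of $\mathbb{D}$ (which does not affect finiteness of the integral), we may normalize the Denjoy--Wolff point to $\tau = 0$ in the elliptic case and $\tau = 1$ in the non-elliptic case. The main structural observation is that in both cases $h$ lies in a classical subfamily of univalent functions: $\lambda$-spirallike in the elliptic case, and close-to-convex in the non-elliptic case (since the condition $\Omega + it \subset \Omega$ for every $t \ge 0$ forces $\mathbb{C} \setminus \Omega$ to be a disjoint union of parallel downward vertical rays, which by Kaplan's characterization makes $h$ close-to-convex).

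For the direct integral estimate in the non-elliptic case, set $P(z) := -i(1-z)^2 h'(z)$: by \eqref{koenignel} this is holomorphic on $\mathbb{D}$ with $\Re P \ge 0$, and Harnack's inequality gives $c(1-|z|) \le |P(z)| \le C/(1-|z|)$, hence
$$\frac{c\,(1-|z|)}{|1-z|^{2}} \;\le\; |h'(z)| \;\le\; \frac{C}{|1-z|^{2}\,(1-|z|)}.$$
For $p \in (1/2, 2/3)$, integrating the upper bound and invoking the standard asymptotic $\int_0^{2\pi} |1-re^{i\theta}|^{-2p}\,d\theta \sim (1-r)^{1-2p}$ reduces the integral to $\int_0^1 (1-r)^{1-3p}\,dr < \infty$; smaller positive $p$ is easier, and the Koebe lower bound on $|h|$ handles $p \in (-1, 0)$. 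The elliptic case is analogous: Harnack applied to $zh'(z)/(\lambda h(z))$ (which has non-negative real part by \eqref{koenigel} with $\tau = 0$) gives $|zh'(z)/h(z)| \lesssim (1-|z|)^{-1}$, and combining this with Koebe's upper bound $|h(z)| \lesssim |z|/(1-|z|)^{2}$ and Theorem~\ref{integralmeans} with $a = 2$ reduces matters to the same integral $\int_0^1 (1-r)^{1-3p}\,dr$.

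The main obstacle is the subrange $p \in (-2, -1]$: the elementary Harnack lower bound yields integrability only for $p > -1$, and the analogous lower Koebe bound in the elliptic case runs into the same barrier. The plan for this subrange is to fall back on the structural observation of the first paragraph and invoke Brennan's conjecture for the corresponding classical class---spirallike in the elliptic case, close-to-convex in the non-elliptic case---both of which are covered by the Dahlberg--Lewis theorem attributed in \cite{BRE78}. Equivalently, one may pass to the image domain via the change of variables $\int_{\mathbb{D}} |h'|^p\,dA = \int_{\Omega} |g'|^{2-p}\,dA$ with $g = h^{-1}$ and $2-p \in [3, 4)$, and exploit the vertical-strip structure of $\Omega$ to control $|g'|$ through the hyperbolic metric on $\Omega$---this is the essence of the Dahlberg--Lewis argument.
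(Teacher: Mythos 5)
Your elementary estimates for $p\in(-1,\tfrac{2}{3})$ are fine (this range is classical for arbitrary univalent functions anyway), and in the non-elliptic case your identification of the Koenigs map as close-to-convex is correct: \eqref{koenignel} says exactly that $h'/\psi'$ has non-negative real part for the convex map $\psi(z)=i\overline{\tau}^{-1}/(\tau-z)$, so citing the Dahlberg--Lewis theorem for close-to-convex functions legitimately disposes of $p\in(-2,-1]$ there. The genuine gap is the elliptic case when $\lambda$ is not real. A $\lambda$-spirallike function with $\Im\lambda\neq 0$ is in general neither starlike nor close-to-convex --- the complement of its image is a union of logarithmic spiral arcs, not of non-crossing rays, so Kaplan's criterion fails --- and the Dahlberg--Lewis result as attributed in \cite{BRE78} covers only the starlike and close-to-convex classes. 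Your entire treatment of $p\in(-2,-1]$ in the elliptic case therefore rests on a citation that does not apply; the same objection hits your ``alternative'' change-of-variables sketch, which again exploits a ray/strip structure of the complement that spirallike domains do not have. (A minor slip elsewhere: to normalize the Denjoy--Wolff point you must \emph{pre}-compose $h$ with a disk automorphism, not post-compose; post-composition would change the image domain, and $h$ does not map into $\mathbb{D}$ in the first place.)

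The missing idea, which is how the paper closes exactly this barrier, is to apply subordination to the \emph{reciprocal} of the Herglotz factor rather than a pointwise Harnack lower bound. Writing $zh'(z)/h(z)=\lambda u(z)$ with $\Re u\geq 0$ and using Koebe to bound $|h(z)/z|$ from below on $\delta<|z|<1$, one gets $|h'(z)|^{p}\lesssim |1/u(z)|^{|p|}$ there; since $1/u$ also has non-negative real part, it is subordinate to a half-plane map $g$, and Littlewood's subordination principle combined with Theorem~\ref{integralmeans} (with $a=1$) gives
\begin{equation*}
\int_0^{2\pi}\left|u(re^{i\theta})\right|^{-|p|}d\theta=O\!\left((1-r)^{1-|p|}\right),
\end{equation*}
which is integrable in $r$ for every $|p|<2$. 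The point is that subordination controls the \emph{size of the set} where $\Re u$ is as small as $1-r$, which is what upgrades the admissible range from $p>-1$ (all your pointwise bound can give) to $p>-2$; the identical argument applied to $f(z)=-i\overline{\tau}(\tau-z)^{2}h'(z)$ from \eqref{koenignel} also handles the non-elliptic case without any appeal to close-to-convexity, making the proof uniform.
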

\begin{proof}
Without loss of generality, we can assume that $p\in(-2,-1)$. By \eqref{koenigel}, \eqref{koenignel} and the Koebe distortion theorem there exist a holomorphic function $f$ with $\Re f\geq 0$ such that
\[|h'(z)|^p\lesssim  |f(z)|^{|p|},\qquad 0<\delta<|z|<1.\]
We consider the function $g(z)=\frac{\overline{f(0)}z+f(0)}{1-z}$ which maps the unit disk onto the right half-plane. We apply Littlewood's subordination principle and Theorem~\ref{integralmeans}, yielding to
\begin{align*}
\int\limits_{\mathbb{D}}\left|h'(w)\right|^p\,dA(w)&\lesssim  \int\limits_{\delta}^1\int\limits_{0}^{2\pi}\left|h'(re^{i\theta})\right|^p\,d\theta\,dr\\
&\lesssim \int\limits_{\delta}^1\int\limits_{0}^{2\pi}\left|g(re^{i\theta})\right|^{|p|}\,d\theta\,dr\\
&\lesssim \int\limits_{\delta}^1\frac{1}{(1-r)^{|p|-1}}\,dr<\infty.\qedhere
\end{align*}
\end{proof}

\subsection{B\'{e}koll\'{e}-Bonami weights and integration operators}
Given a non-negative, integrable function $\omega$ on the unit disk, we denote by $L^p(\omega),\,p>0$ the $L^p$ space with respect to the measure $\omega(z)\,dA(z)$. Let $L_\alpha^p(\omega)$ be the corresponding Bergman space of analytic functions, \cite{HKZ00}.

B\'{e}koll\'{e}-Bonami weights are the analogs of the well-known Muckenhoupt weights, \cite{CR85}.
For $p>1,\,\eta>-1$ the class of weights $B_p(\eta)$ consists of all weights $\omega$ such that
\begin{equation}\label{bekk}
\int\limits_{S(\theta,h)}\omega(z)\,dA_\eta(z)\left(\int\limits_{S(\theta,h)}\omega^\frac{-1}{p-1}(z)\,dA_\eta(z)\right)^{p-1}\lesssim  \left(A_\eta\left(S(\theta,h)\right)\right)^p,
\end{equation}
where $dA_\eta(z)=(1+\eta)\left(1-|z|^2\right)^\eta\,dA(z)$ and $S(\theta,h)$ is an arbitrary Carleson box
\begin{equation*}
S(\theta,h)=\left\{z=re^{it}: 1-h<r<1,\,\left|t-\theta\right|<\frac{h}{2} \right\}.
\end{equation*}
The implied constant in \eqref{bekk} is independent of the Carleson box.
If $\eta=0$ we will write $B_p$ instead of $B_p(0)$.

In \cite{BE81}, B\'{e}koll\'{e} proved that the Bergman projection 
\begin{equation*}
P_\eta(f)(z)=\int\limits_{\mathbb{D}}\frac{f(w)}{\left(1-\overline{z}w\right)^{\eta+2}}\,dA_\eta(z)\qquad \eta>-1,
\end{equation*}
is bounded from $L^p(\omega),\,p>1$ onto $L_\alpha^p(\omega)$ if and only if $\frac{\omega(z)}{\left(1-|z|^2\right)^\eta}\in B_p(\eta)$.

We list a couple of lemmata, essential for our purpose.
\begin{theorem}[\cite{AC09}]\label{AlCO}
Let $p_0>1,\,\eta>-1$ and $\omega(z)$ be a weight such that 
\[\frac{\omega(z)}{\left(1-|z|^2\right)^\eta}\in B_{p_0}(\eta).\]
Then, for every $p>0$ and every $f\in L_\alpha^p(\omega)$
\begin{equation}\label{LP2}
\int\limits_{\mathbb{D}} \left|f(z)\right|^p\omega(z)\,dA(z)\sim \left|f(0)\right|^p+\int\limits_{\mathbb{D}} \left|f'(z)\right|^p\left(1-|z|^2\right)^p\omega(z)\,dA(z).
\end{equation}
\end{theorem}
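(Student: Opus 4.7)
The plan is to prove the two inequalities in the equivalence \eqref{LP2} separately. The estimate
\[|f(0)|^p+\int\limits_{\mathbb{D}}|f'(z)|^p(1-|z|^2)^p\omega(z)\,dA(z)\lesssim \int\limits_{\mathbb{D}}|f(z)|^p\omega(z)\,dA(z)\]
relies essentially only on the doubling nature of $\omega$, whereas the reverse estimate is where the full strength of the B\'ekoll\'e-Bonami condition enters, through the boundedness of the Bergman projection $P_\eta$ on $L^p(\omega)$ proved in \cite{BE81}.

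For the easy direction, I would start from the fact that $|f|^p$ is subharmonic, so that the mean value property on the Euclidean disk $D(z,\tfrac{1-|z|}{2})$ together with Cauchy's estimate for $f'$ gives the pointwise bound
\[(1-|z|)^p|f'(z)|^p\lesssim \frac{1}{(1-|z|)^2}\int\limits_{D(z,(1-|z|)/2)}|f(w)|^p\,dA(w).\]
Multiplying by $\omega(z)$, integrating over $\mathbb{D}$, and applying Fubini, the problem reduces to the averaged estimate $\int_{D(w,c(1-|w|))}\omega(z)\,dA(z)\lesssim (1-|w|)^2\omega(w)$, which is a standard consequence of the B\'ekoll\'e condition via H\"older applied in a Carleson box containing the hyperbolic disk. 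The contribution of $|f(0)|^p$ is handled directly by subharmonicity of $|f|^p$ at the origin.

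For the reverse direction, I would start from the reproducing formula and write
\[f(z)-f(0)=\int\limits_{\mathbb{D}}f(w)\left(\frac{1}{(1-\overline{w}z)^{\eta+2}}-1\right)\,dA_\eta(w),\]
and then integrate by parts in $\overline{w}$ to trade $f$ for $f'$. This produces a factor of $(1-|w|^2)$ and a kernel whose modulus is dominated by the standard Bergman kernel $|1-\overline{w}z|^{-(\eta+2)}$, so that $f-f(0)$ is essentially $P_\eta$ applied to a function controlled pointwise by $(1-|w|^2)|f'(w)|$. In the range where $P_\eta$ acts boundedly on $L^p(\omega)$, which contains an open neighborhood of $p_0$ by the self-improvement (reverse H\"older) property of the class $B_{p_0}(\eta)$, the desired estimate then follows directly. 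The main obstacle, and the reason the theorem is genuinely delicate, is the case $0<p\leq 1$: the Bergman projection is not well behaved on these quasi-Banach spaces, and one must instead appeal to an atomic decomposition for $L_\alpha^p(\omega)$ adapted to B\'ekoll\'e-Bonami weights, whose construction exploits the doubling property of $\omega\,dA_\eta$ to reduce the estimate to a sum of atomic pieces that can each be controlled by the derivative term.
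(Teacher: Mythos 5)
A preliminary remark: the paper does not prove this statement at all --- it is imported verbatim from \cite{AC09} --- so there is no internal argument to compare yours with, and I can only judge the sketch on its own terms. It contains two genuine gaps. In the direction you call easy, the step you describe as a ``standard consequence of the B\'ekoll\'e condition via H\"older'', namely
\[\int_{D(w,c(1-|w|))}\omega(z)\,dA(z)\lesssim (1-|w|)^{2}\,\omega(w),\]
is false for general weights in $B_{p_0}(\eta)$. H\"older and Jensen show only that the averages of $\omega$ and of $\omega^{-1/(p_0-1)}$ over a Carleson box are each comparable to the reciprocal of the other; no B\'ekoll\'e--Bonami condition bounds the \emph{pointwise} value $\omega(w)$ from below by an average of $\omega$. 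Concretely, $\omega(z)=|z-a|$ with $a\in\mathbb{D}$ fixed belongs to $B_{2}$, yet the displayed inequality fails at $w=a$ and degenerates with no uniform constant for $w$ near $a$; the weights $e^{r\Re g}$ with $g\in\mathfrak{B}$ that actually occur in this paper oscillate in exactly the same way. What Fubini really leaves you with is $\int|f|^{p}\widetilde{\omega}\,dA$, where $\widetilde{\omega}$ is the average of $\omega$ over hyperbolic disks, and the inequality $\int|f|^{p}\widetilde{\omega}\,dA\lesssim\int|f|^{p}\omega\,dA$ is true only because $f$ is analytic: it requires a Carleson-box/stopping-time argument exploiting the doubling of $\omega\,dA_\eta$ (the mechanism behind Theorem~\ref{CO}), not a pointwise comparison of $\omega$ with its averages.

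In the reverse direction the reproducing-formula-plus-projection scheme is the right idea but covers far less of the claimed range than you suggest. The classes $B_p(\eta)$ increase with $p$, so the hypothesis $\omega/(1-|z|^2)^\eta\in B_{p_0}(\eta)$ together with self-improvement yields boundedness of $P_\eta$ on $L^p(\omega)$ only for $p\geq p_0-\varepsilon$; for $1<p<p_0-\varepsilon$ the projection is in general unbounded (power weights $(1-|z|^2)^\beta$ at the endpoint of the $B_{p_0}(\eta)$ range already provide examples), so this range is just as inaccessible to your argument as $0<p\leq 1$, and you do not address it. The entire content of the theorem is precisely this uniformity in $p>0$ under a single $B_{p_0}(\eta)$ hypothesis, and deferring everything below $p_0-\varepsilon$ to an unspecified atomic decomposition is a pointer to the literature rather than a proof. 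The architecture (subharmonicity one way, reproducing formula the other) is sound, but both halves must be rebuilt around estimates for averages of $\omega$ rather than pointwise values, together with a mechanism --- such as the $p$-independence of the Carleson measures for $L^p_\alpha(\omega)$ --- that transfers the inequality from exponents near $p_0$ to all $p>0$.
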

\begin{theorem}[\cite{CON10}]\label{CO}
Let $\mu$ be a positive Borel measure in the unit disk and $\omega(z)$ be a weight such that $\frac{\omega(z)}{\left(1-|z|^2\right)^\eta}\in B_{p_0}(\eta),\,p_0>1,\,\eta>-1$. Then, the property of $\mu$ being a Carleson measure for $L_\alpha^p(\omega)$ is independent of $p>0$.
\end{theorem}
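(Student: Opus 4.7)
The plan is to reduce the statement to a geometric characterization of Carleson measures that is manifestly independent of $p$. Specifically, I would show that for any $p>0$, $\mu$ is a Carleson measure for $L^p_\alpha(\omega)$ if and only if
\begin{equation*}
(\star)\qquad \sup_{S}\frac{\mu(S)}{\omega(S)}<\infty,
\end{equation*}
where the supremum runs over all Carleson boxes $S\subset\mathbb{D}$ and $\omega(S):=\int_S\omega\,dA$. Since the condition $(\star)$ does not involve $p$, this establishes the theorem at once.

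For the necessity of $(\star)$, I would test the Carleson inequality against the family
\begin{equation*}
f_a(z)=\frac{(1-|a|^2)^{\beta}}{(1-\overline{a}z)^{\beta+\gamma}},
\end{equation*}
taking $a=(1-h)e^{i\theta}$ at the ``top'' of $S=S(\theta,h)$ and $\beta,\gamma$ sufficiently large in terms of $p$, $p_0$, $\eta$. On $S$ one has $|f_a(z)|^p\gtrsim h^{-\gamma p}$, while a standard integral computation combined with the B\'ekoll\'e-Bonami hypothesis \eqref{bekk} yields $\|f_a\|_{L^p_\alpha(\omega)}^p\lesssim h^{-\gamma p}\,\omega(S)$. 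Substituting both estimates into the Carleson inequality produces $\mu(S)\lesssim\omega(S)$, which is $(\star)$.

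For the sufficiency of $(\star)$, the key step is a weighted sub-mean-value estimate: for any holomorphic $f$ on $\mathbb{D}$ and any $z\in\mathbb{D}$,
\begin{equation*}
|f(z)|^p\lesssim\frac{1}{\omega(S_z)}\int_{S_z}|f(w)|^p\,\omega(w)\,dA(w),
\end{equation*}
where $S_z$ is a Carleson box of size $\sim 1-|z|$ with top near $z$. The B\'ekoll\'e-Bonami hypothesis is crucial here: it allows one to replace the usual Lebesgue sub-mean-value on a pseudo-hyperbolic disk by one against the weighted measure $\omega\,dA$. Integrating this pointwise bound against $d\mu$, decomposing $\mathbb{D}$ into a controlled union of Carleson boxes $\{S_k\}$ with $\mu(S_k)\lesssim\omega(S_k)$ by $(\star)$, and summing would yield $\int_{\mathbb{D}}|f|^p\,d\mu\lesssim\|f\|_{L^p_\alpha(\omega)}^p$.

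The main obstacle is the weighted sub-mean-value itself: this is where the structural hypothesis $\omega\in B_{p_0}(\eta)$ does the real work, and it must be executed with constants uniform in $p>0$. All the other ingredients — test-function estimates, covering arguments, summation — then run with $p$-dependent but finite constants. Because the geometric condition $(\star)$ is the same for every $p$, the Carleson property of $\mu$ for $L^p_\alpha(\omega)$ is too.
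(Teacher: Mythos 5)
This theorem is quoted from \cite{CON10} and the paper supplies no proof of its own, so there is nothing internal to compare against; judged on its merits, your proposal is correct and is essentially the argument of the cited reference (which in turn follows Luecking's scheme for standard weights): one shows that for every $p>0$ the Carleson property is equivalent to the $p$-free geometric condition $\mu(S)\lesssim\omega(S)$ over Carleson boxes. The necessity via the kernels $f_a$ works once one knows $\int_{\mathbb{D}}\omega(z)|1-\overline{a}z|^{-c}\,dA(z)\lesssim(1-|a|)^{-c}\,\omega(S_a)$ for $c$ large, which follows from the doubling of $\omega\,dA$ over Carleson boxes that the $B_{p_0}(\eta)$ condition forces. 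The one step you rightly single out as the crux, the weighted sub-mean-value inequality, does go through: apply subharmonicity to $|f|^{p/p_0}$ on a pseudohyperbolic disk $D_z$, then H\"older with exponents $p_0,p_0'$ against $\omega^{1/p_0}\omega^{-1/p_0}$, and use \eqref{bekk} to control $\bigl(\int_{D_z}\omega^{-1/(p_0-1)}\bigr)^{1/p_0'}$ by $|D_z|\,\omega(D_z)^{-1/p_0}$; note that the resulting constant does depend on $p$ (harmlessly), contrary to your remark that it must be uniform in $p$ --- uniformity is not needed since only the finiteness of each embedding constant matters. For the final summation, a Fubini argument (integrating the pointwise bound in $z$ against $d\mu$ and swapping the order) is slightly cleaner than a covering by boxes, but both are standard and correct.
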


A holomorphic self-map of the unit disk $\phi$ induces a (formal) composition operator $C_\phi(f)= f\circ \phi$. A direct consequence of the theorem above is that for B\'{e}koll\'{e}-Bonami weights $\omega(z)$, the property of $C_\phi$ to be bounded on $L_\alpha^p(\omega)$ is independent of $p>0$.
\subsection{Integration operators}
This is a well-studied topic that attracted the interest of plenty of mathematicians the last 30 years, see for example \cite{AL07,AC01, Ale_Arist,nikos24, contpel,GirPel06,Pel23,Rt07,Volberg17}.
Every function $g$ that belongs to the Bloch spaces corresponds to a bounded integration operator $T_g$ on $L_\alpha^p,\,p>0$:
\[T_g(f)=\int_0^zf(w)g'(w)\,dA(w).\]
Recall that the Bloch spaces $\mathfrak{B}$ contains all analytic functions $g:\mathbb{D}\mapsto \mathbb{C}$ that are Lipschitz continuous when the disk and the plane are equipped with the hyperbolic distance, see \cite{BM07}, and the euclidean metric, respectively. Or equivalently $\mathfrak{B}$ consists of all analytic functions $g$ in the unit disk such that
\[\sup_\mathbb{D}\{(1-|z|^2)|g'(z)|\}<\infty.\]
The spectrum of the operator $T_g$ has been determined in terms of B\'{e}koll\'{e}-Bonami weights in \cite{AC09,APR19}.
\begin{theorem}\label{tg}
 The point $\lambda\in\mathbb{C}\setminus\{0\}$ belongs to the resolvent set of $T_g$ acting on $L_\alpha^p,\, p\geq 1$ if and only if 
 \[e^{p\Re\frac{g}{\lambda}} \in B_\infty.\]
 If we further assume that the derivative of symbol $g$ is the Cauchy transform of a finite measure $\mu$ on the unit circle, that is:
\begin{equation}\label{cauchy}
g'(z)=\int_\mathbb{T}\frac{1}{\zeta-z}\,d\mu(\zeta),
\end{equation}
 then 
 \[\sigma\left(T_g\lvert_{L^p_\alpha}\right)=\{0\}\cup\overline{\left\{\lambda:e^{\frac{\Re g}{\lambda}}\notin L^p_\alpha\right\}}.\]
 
\end{theorem}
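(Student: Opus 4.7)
The plan is to reduce the resolvent equation $(T_g - \lambda I)f = h$ to a first-order linear ODE in $z$. Differentiating both sides and using $(T_g f)' = fg'$ gives $\lambda f' - g' f = -h'$ together with the initial condition $\lambda f(0) = -h(0)$. Integrating with the factor $e^{-g/\lambda}$ produces the unique holomorphic candidate
\begin{equation*}
S_\lambda h(z) \;:=\; -\frac{1}{\lambda}\, e^{g(z)/\lambda}\!\left(h(0) + \int_0^z h'(w)\, e^{-g(w)/\lambda}\, dw\right),
\end{equation*}
so $\lambda \neq 0$ belongs to the resolvent set of $T_g$ precisely when $S_\lambda$ is bounded on $L_\alpha^p$.

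To analyse $S_\lambda$, set $\omega_\lambda(z) := e^{p\Re(g(z)/\lambda)} = |e^{g(z)/\lambda}|^p$. Then $\|S_\lambda h\|_{L_\alpha^p}^p = |\lambda|^{-p} \int_{\mathbb{D}} |\tilde h(z)|^p \omega_\lambda(z)\, dA(z)$, where $\tilde h(z) := h(0) + \int_0^z h'(w) e^{-g(w)/\lambda}\, dw$. Assuming $\omega_\lambda \in B_\infty$, the Littlewood--Paley equivalence of Theorem~\ref{AlCO} lets us replace this by $|\tilde h(0)|^p + \int |\tilde h'|^p (1-|z|^2)^p \omega_\lambda\, dA$, and since $|\tilde h'|^p \omega_\lambda = |h'|^p$ the right-hand side collapses, after another application of Theorem~\ref{AlCO} with $\omega \equiv 1$, to a constant multiple of $\|h\|_{L_\alpha^p}^p$. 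This yields sufficiency. For necessity I would test $S_\lambda$ against reproducing-kernel-type functions supported in Carleson boxes, and use Theorem~\ref{CO} to upgrade the resulting $B_q$ bound into the $p$-independent statement $\omega_\lambda \in B_\infty$.

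For the second part, assume $g'$ is the Cauchy transform of a finite measure on $\mathbb{T}$. Then $\Re g$ has explicit boundary behaviour via Plemelj--Sokhotski, and this rigidity makes the $B_\infty$ condition on $\omega_\lambda$ equivalent to the much simpler integrability requirement $e^{\Re g /\lambda} \in L_\alpha^p$, because $\int_{\mathbb{D}} \omega_\lambda\, dA = \|e^{g/\lambda}\|_{L_\alpha^p}^p$ and the Poisson-extension structure of $\Re g$ forces any $L^1$-finiteness of $\omega_\lambda$ to propagate to every Carleson box. The set of $\lambda \in \mathbb{C}\setminus\{0\}$ for which $e^{\Re g/\lambda} \notin L_\alpha^p$ is therefore the resolvent's complement; taking its closure and adjoining $\{0\}$ --- which always lies in $\sigma(T_g)$ because $T_g(f)(0)=0$ forces $T_g$ to miss every $h$ with $h(0)\neq 0$ --- produces the stated spectrum.

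The main obstacle is the weighted norm inequality that ties the ODE-type operator $S_\lambda$ to a B\'ekoll\'e--Bonami class. Sufficiency follows cleanly from Theorems~\ref{AlCO} and~\ref{CO} through the cancellation $|\tilde h'|^p \omega_\lambda = |h'|^p$, but necessity --- deducing $\omega_\lambda \in B_\infty$ from mere boundedness of the resolvent --- requires a delicate testing argument extracting the reverse H\"older inequality implicit in $B_\infty$. The collapse from $B_\infty$ to the $L_\alpha^p$-integrability condition in the Cauchy-transform case is the other subtle step, resting on subharmonicity of $\Re g$ and the uniform control that the Cauchy-transform representation gives over level sets of $\Re g$.
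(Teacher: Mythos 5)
This statement is quoted in the paper from \cite{AC09,APR19} without proof, so there is no internal argument to compare against; I can only assess your proposal on its own terms. Your first step is the right one and matches the known approach: differentiating $(T_g-\lambda)f=h$ gives $\lambda f'-g'f=-h'$ with $\lambda f(0)=-h(0)$, and the integrating factor $e^{-g/\lambda}$ produces the explicit candidate resolvent $S_\lambda$. Your sufficiency argument is also essentially complete: granted $\omega_\lambda=e^{p\Re(g/\lambda)}\in B_\infty$, Lemma~\ref{tglem} upgrades this to $\omega_\lambda\in B_q$ for some $q>1$, Theorem~\ref{AlCO} applies, and the cancellation $|\tilde h'|^p\omega_\lambda=|h'|^p$ closes the estimate (you should still say a word about why Theorem~\ref{AlCO} may be applied to $S_\lambda h$ before knowing it lies in $L^p_\alpha$, e.g.\ by working first on dilations or polynomials).

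The two remaining pieces are genuine gaps, not proofs. For necessity, ``test against reproducing-kernel-type functions and use Theorem~\ref{CO}'' is a plan, not an argument: extracting the self-improving $B_\infty$ (reverse H\"older) condition from mere boundedness of $S_\lambda$ is the hard direction and the main technical content of \cite{APR19}; nothing in your sketch explains how boundedness on a single $L^p_\alpha$ yields the maximal-function inequality defining $B_\infty$. For the Cauchy-transform case, your reasoning is internally inconsistent: you assert that $\{\lambda: e^{\Re g/\lambda}\notin L^p_\alpha\}$ \emph{equals} $\sigma(T_g)\setminus\{0\}$ and then take its closure --- but a spectrum is already closed, so if that identity held the closure would be redundant. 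The whole point of the closure in the statement is that the non-integrability set need not be closed: at boundary values of $\lambda$ the weight can be integrable while $B_\infty$ fails, so such $\lambda$ lie in the spectrum despite $e^{\Re g/\lambda}\in L^p_\alpha$. Your claimed equivalence ``$\omega_\lambda\in B_\infty$ iff $e^{\Re g/\lambda}\in L^p_\alpha$'' is therefore false as stated; what one actually proves, using the explicit structure of $\Re g$ coming from the Cauchy/Herglotz representation (atoms of $\mu$ contribute logarithmic singularities, hence power weights $|1-\bar\zeta z|^{-c}$), is that $B_\infty$ membership corresponds to $\lambda$ lying in the \emph{interior} of the integrability region. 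That analysis is absent from your proposal.
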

The class $B_\infty$ contains all weights $\omega$ in $\mathbb{D}$ such that
\[\sup\frac{1}{\int_{S(\theta,h)}\omega}\int_{S(\theta,h)}M\left(\omega\mathbf{1}_{S(\theta,h)}\right)<\infty,\]
where $M$ is the Hardy-Littlewood maximal function (over Carleson boxes) and the suppremum is taken over all Carleson boxes.

We extract the following lemma from \cite{APR19}:

\begin{lemma}\label{tglem}
 Let $g\in \mathfrak{B}$ and $r\in\mathbb{R}$. Then, the weight $\omega:=e^{r\Re g}$ belongs in $B_\infty$ if and only if there exists $q>1$ such that $\omega\in B_q$ if and only if there exists a $\gamma>0$ such that
 \begin{equation}
     \int_{\mathbb{D}}\frac{\omega(w)}{|1-\overline{z}w|^{\gamma+2}}\,dA(w)\lesssim  \frac{\omega(z)}{\left(1-|z|^2\right)^\gamma},\qquad z\in\mathbb{D}.
 \end{equation}
\end{lemma}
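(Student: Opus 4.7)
The three conditions are proved equivalent via the cycle $(2)\Rightarrow(1)\Rightarrow(3)\Rightarrow(2)$. The trivial direction $(2)\Rightarrow(1)$ is the classical inclusion $B_q\subset B_\infty$.

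For $(1)\Rightarrow(3)$, I would first pass from $\omega\in B_\infty$ to $\omega\in B_q$ for some $q>1$ by the Bekoll\'e--Bonami self-improvement (reverse H\"older) property of $B_\infty$ weights, the disk analogue of Coifman--Fefferman's result for $A_\infty$ Muckenhoupt weights, which is established in the framework of \cite{APR19}. The Bloch hypothesis then enters structurally: because $(1-|z|^2)|g'(z)|\lesssim 1$, the weight $\omega=e^{r\Re g}$ is pointwise comparable on any pseudohyperbolic disc of fixed radius, so on each cell $W_j$ of a hyperbolic Whitney decomposition of $\mathbb D$ (with centers $z_j$) one has $\omega\sim\omega(z_j)$. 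Consequently
\[\int_{\mathbb D}\frac{\omega(w)\,dA(w)}{|1-\bar z w|^{\gamma+2}}\,\lesssim\,\sum_j\frac{\omega(z_j)(1-|z_j|^2)^2}{|1-\bar z z_j|^{\gamma+2}}.\]
Grouping the Whitney cells by dyadic pseudohyperbolic distance from $z$ and applying the $B_q$ testing condition on Carleson boxes to bound the partial sum in each shell, one sums geometrically against the kernel decay; the resulting series converges provided $\gamma$ is taken large enough in terms of $q$ and $\|g\|_\mathfrak{B}$.

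For $(3)\Rightarrow(2)$, I evaluate $(3)$ at the reference point $z_S:=(1-h)e^{i\theta}$ of a Carleson box $S:=S(\theta,h)$. Since $|1-\bar{z_S}w|\lesssim h$ and $1-|z_S|^2\sim h$ for $w\in S$, this gives $\int_S \omega\,dA\lesssim h^2\,\omega(z_S)$. For the complementary factor, note that $\omega^{-1/(q-1)}=e^{-r\Re g/(q-1)}$ is itself of the Bloch-exponential form, so the same pointwise-comparability on pseudohyperbolic discs applies. A Whitney-type refinement of $S$ on which both $\omega$ and $\omega^{-1/(q-1)}$ are essentially constant, summed by the same scheme as in the previous step, yields $\int_S\omega^{-1/(q-1)}\,dA\lesssim h^2\omega(z_S)^{-1/(q-1)}$; multiplying and using $|S|\sim h^2$ produces $\omega\in B_q$ for $q$ sufficiently large.

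The principal obstacle, present in both $(1)\Rightarrow(3)$ and $(3)\Rightarrow(2)$, is the Whitney bookkeeping: Carleson boxes have unbounded hyperbolic diameter, so the Bloch comparability does not hold uniformly across $S(\theta,h)$, and one must balance the decay of the Bergman-type kernel against the exponential growth of $\omega$ along hyperbolic geodesics (governed by $\|g\|_\mathfrak B$) in order to close the geometric summation and fix an admissible value of $\gamma$.
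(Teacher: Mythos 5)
The paper offers no proof of this lemma at all: it is quoted verbatim from \cite{APR19}, so the only question is whether your sketch is a correct self-contained argument. It is not, for two related reasons. First, in $(1)\Rightarrow(3)$ you begin by passing from $\omega\in B_\infty$ to $\omega\in B_q$ via ``the Bekoll\'e--Bonami self-improvement (reverse H\"older) property of $B_\infty$ weights, the disk analogue of Coifman--Fefferman.'' No such analogue exists for general Bekoll\'e--Bonami weights: in contrast to the Muckenhoupt setting one has $B_\infty\supsetneq\bigcup_{q>1}B_q$, and the entire point of the lemma (and of \cite{APR19}) is that equality is restored for the invariant class $e^{r\Re g}$, $g\in\mathfrak{B}$. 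Invoking $(1)\Rightarrow(2)$ as a known black box is therefore circular: it is the first ``if and only if'' in the very statement you are asked to prove, and it is precisely the implication where the Bloch hypothesis must do work beyond local comparability on pseudohyperbolic discs.

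Second, your closing step $(3)\Rightarrow(2)$ has a concrete hole. Testing (3) at $z_S$ correctly gives $\int_S\omega\,dA\lesssim h^2\omega(z_S)$, but the companion bound $\int_S\omega^{-1/(q-1)}\,dA\lesssim h^2\,\omega(z_S)^{-1/(q-1)}$ does not follow from ``the same scheme'': condition (3) carries no information whatsoever about the reciprocal weight, and the Bloch property only yields $\omega(w)\sim\omega(z)$ for $w$ in a pseudohyperbolic disc of \emph{bounded} radius about $z$. Across an entire Carleson box $\Re g$ may vary by $\sim\norm{g}_{\mathfrak{B}}\log(1/h)$, so $\omega^{-1/(q-1)}$ can vary by a power of $h$, and its average over $S$ need not be comparable to its value at $z_S$ for any fixed $q$. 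Controlling this dual factor is exactly the hard half of the $B_q$ condition. Your Whitney/dyadic-shell bookkeeping does handle the kernel sums --- the implication $(2)\Rightarrow(3)$ along those lines is essentially correct once one has the polynomial doubling $\omega(S(\theta,2^k h))\lesssim 2^{2qk}\omega(S(\theta,h))$ from $B_q$ and takes $\gamma$ large --- but it supplies no reverse-H\"older or duality mechanism for entering $\bigcup_{q>1}B_q$ from either (1) or (3). That mechanism is the substance of \cite{APR19} and is missing from the proposal.
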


\section{Proof of the main result}
\begin{lemma}\label{bek}
The weight $\omega(z)=\left|h(z)\right|^{p},\,p\in(-2,-1)$, where $h$ is the Koenings map of a holomophic semigroup, exists in $B_q$ for some $q>1$.
\end{lemma}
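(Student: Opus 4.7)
The plan is to combine a factorization of $h$ with Lemma~\ref{tglem} and the Herglotz/Berkson--Porta structure coming from the semigroup. I first reduce to $h(0)=0$: in the elliptic case this is automatic after conjugating the semigroup so that $\tau=0$; in the non-elliptic case I subtract an additive constant from $h$, which preserves \eqref{koenignel} since the condition is on $h'$ only. I then factor $h(z)=z\,h_{1}(z)$ with $h_{1}$ holomorphic and zero-free on $\mathbb{D}$, so that $|h(z)|^{p}=|z|^{p}\,|h_{1}(z)|^{p}$.

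The first step is to establish $g:=\log h_{1}\in\mathfrak{B}$ with an absolute bound on its Bloch seminorm. For $|z|\ge 1/2$, Koebe's distortion theorem (iii) gives $|zh'(z)/h(z)|\le(1+|z|)/(1-|z|)$, and hence $(1-|z|^{2})|g'(z)|=\frac{1-|z|^{2}}{|z|}\,|zh'(z)/h(z)-1|\lesssim 1$. For $|z|<1/2$, $h_{1}$ is holomorphic with $|h_{1}|$ pinched between two absolute constants by Koebe's growth bounds, and Cauchy's estimate controls $|h_{1}'|$ by the maximum of $|h_{1}|$ on a slightly larger disk; hence $|h_{1}'/h_{1}|$ is bounded by an absolute constant.

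The semigroup structure enters through a Herglotz/Berkson--Porta representation of $g$. In the elliptic case, \eqref{koenigel} says $P(z):=zh'(z)/(\lambda h(z))$ is a Herglotz function with $P(0)=1/\lambda$, so there is a finite positive Borel measure $\sigma$ on $\mathbb{T}$ with
$$\log\bigl(h_{1}(z)/h_{1}(0)\bigr)=-2\int_{\mathbb{T}}\log(1-e^{-i\theta}z)\,d\sigma(\theta),$$
and consequently
$$|h_{1}(z)|^{p}=|h_{1}(0)|^{p}\exp\!\left(2|p|\int_{\mathbb{T}}\log|1-e^{-i\theta}z|\,d\sigma(\theta)\right).$$
The non-elliptic case yields an analogous representation based on \eqref{koenignel}.

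Finally, I verify the $B_{q}$ condition on Carleson boxes $S(\theta_{0},h_{0})$. For $h_{0}\ge 1/2$ the inequality is immediate: $\int_{\mathbb{D}}|h|^{p}\,dA<\infty$ because the only singularity of $|h|^{p}$ is at the interior zero of $h$, where it behaves like $|z|^{p}$ with $p>-2$; and $\int_{\mathbb{D}}|h|^{-p/(q-1)}\,dA<\infty$ follows from Theorem~\ref{integralmeans} once $q$ is large enough; meanwhile $|S|^{q}$ is bounded below. For $h_{0}<1/2$, $|z|^{p}$ is pinched between two absolute constants on $S$, so the question reduces to $|h_{1}|^{p}\in B_{q}$. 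I verify the integral criterion of Lemma~\ref{tglem}: each elementary factor $|1-e^{-i\theta}z|^{2|p|}$ satisfies a Forelli--Rudin kernel estimate uniformly in $\theta$, and Jensen's inequality transfers this to the product weight with constants independent of $\sigma$. I expect this last step to be the main obstacle: the Bloch bound on $g$ alone is not sufficient, and the explicit Herglotz structure of the Koenigs map is what secures uniformity over the spectral measure.
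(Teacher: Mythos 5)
Two issues, one of bookkeeping and one substantive. First, the weight you are proving something about is not the one the paper needs: although the lemma as printed says $|h(z)|^{p}$, the paper's own proof and its application in Theorem~\ref{main} (where $\omega(z)=|h'(z)|^{p}$ is the weight fed to Theorem~\ref{CO} and the Littlewood--Paley formula) make clear that the intended weight is $|h'(z)|^{p}$; by Koebe's estimate (iii) the two differ by factors $(1-|z|)^{\pm 1}$, so they are not interchangeable for $B_q$ membership. You cannot be faulted for the typo, but your target is not the statement used downstream. Two further soft spots: your reduction to $h(0)=0$ in the non-elliptic case by subtracting a constant changes $|h|^{p}$ (though it would not change $|h'|^{p}$), and your claim of ``an analogous representation'' in the starlike-at-infinity case is unsubstantiated --- \eqref{koenignel} places Herglotz structure on $\overline{\tau}(\tau-z)^{2}h'(z)$, not on $zh'(z)/h(z)$, so no representation of $\log(h/z)$ of the form you wrote is available there.

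The substantive gap is your final step. You control the geometric-mean weight $\omega(z)=\exp\bigl(2|p|\int_{\mathbb{T}}\log|1-e^{-i\theta}z|\,d\sigma(\theta)\bigr)$ by Jensen's inequality and then average uniform single-factor Forelli--Rudin estimates. But Jensen only bounds the geometric mean \emph{above} by the arithmetic mean: after averaging the single-factor estimates you are left with $\int_{\mathbb{T}}|1-e^{-i\theta}z|^{c}\,d\sigma(\theta)$ on the right-hand side, and to return to $\omega(z)$ you would need the arithmetic mean dominated by the geometric mean, which is false (take $\sigma=\tfrac12(\delta_{\theta_1}+\delta_{\theta_2})$ with $\theta_1\neq\theta_2$ and let $z\to e^{i\theta_1}$: the arithmetic mean stays bounded below while the geometric mean tends to $0$). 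The same wrong-way inequality sinks a direct verification of \eqref{bekk}: on a Carleson box of side $h_0$ centred where $\sigma$ is concentrated, your method yields only the lossy upper bound $\int_{S}\omega\lesssim h_0^{2}$ together with $\bigl(\int_{S}\omega^{-1/(q-1)}\bigr)^{q-1}\lesssim h_0^{2(q-1)-2|p|\sigma(\mathbb{T})}$, and the product of these upper bounds exceeds $|S|^{q}=h_0^{2q}$ by the factor $h_0^{-2|p|\sigma(\mathbb{T})}$. The statement that $e^{r\Re g}\in B_{\infty}$ whenever $g'$ is the Cauchy transform of a finite measure is precisely the nontrivial input the paper imports from the spectral analysis of the integration operator $T_g$ (Theorem~\ref{tg} together with Lemma~\ref{tglem}); it cannot be recovered by termwise kernel estimates plus Jensen, and your argument has no substitute for it. (For comparison, the paper also gives a separate, elementary Carleson-box argument in the starlike-at-infinity case, based on Littlewood subordination and the Feng--MacGregor inequality, which avoids the Herglotz machinery altogether.)
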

\begin{proof}
Let us first assume that $h$ is starlike at infinity. For every automorphism of the unit disk $T_w(z)=\frac{w-z}{1-\overline{w}z},\,w\in\mathbb{D}$, there exists a holomorphic function $u:\mathbb{D}\rightarrow\{z:\Re z\geq0\}$ such that
\begin{equation*}
\left(h\circ T_w\right)'(z)=c_1\frac{u(z)}{(c_2-z)^2},\qquad c_1,\,c_2\in \mathbb{T},
\end{equation*}
one can prove that $c_1=iT_w(\tau)$ and $c_2=T_w(\tau)$, where $\tau$ is the Denjoy-Wolff point of the semigroup. By Littlewood's subordination principle applied as in Proposition~\ref{BrennanKoenig}, we have that
\begin{align*}
\int\limits_{\mathbb{D}} \left|\left(h\circ T_w\right)'(z)\right|^p\,dA(z)&\lesssim \int\limits_{\mathbb{D}}  \left|\frac{1}{u(z)}\right|^{|p|}\,dA(z)=\int\limits_{\mathbb{D}}  \left|\frac{\alpha-\overline{\alpha}z}{1-z}\right|^{|p|}\,dA(z)\\
&\leq C(p)|\alpha|^{|p|},
\end{align*}
where $|\alpha|=\left| \frac{1}{u(0)}\right|=  \frac{1}{\left|h'(w)\right|(1-|w|^2)}$.
Taking into account the behaviour of the automorphism $T_w$ in the Carleson box $S(\theta,h)$ with center $w=(1-\frac{h}{2})e^{i\theta}\in\mathbb{D}$, we obtain:
\begin{align*}
\frac{1}{\left|S(\theta,h)\right|}\int\limits_{S(\theta,h)}\left|h'(z)\right|^p\,dA(z)&\lesssim \left(1-|w|^2\right)^{|p|}\int\limits_{\mathbb{D}} \left|\left(h\circ T_w\right)'(z)\right|^p\,dA(z)\\
&\leq C(p)\left|h'(w)\right|^{p}.
\end{align*}
Working similarly or applying directly the following result of Feng and MacGregor \cite{FMCG76,HH2002}: For every conformal function $f$ in the unit disk and for every $b\in[0,\frac{2}{3})$
\[\int\limits_{\mathbb{D}}|f'(z)|^b\,dA(z)\lesssim  |f'(0)|^{b},\]
one can prove that
\begin{align*}
\frac{1}{\left|S(\theta,h)\right|}\int\limits_{S(\theta,h)}\left|h'(z)\right|^\frac{|p|}{3}\,dA(z)&\lesssim \left(1-|w|^2\right)^{\frac{p}{3}}\int\limits_{\mathbb{D}} \left|\left(h\circ T_w\right)'(z)\right|^{\frac{|p|}{3}}\,dA(z)\\
&\leq C(p)\left|h'(w)\right|^{\frac{|p|}{3}}.
\end{align*}
Thus, $|h'|^p\in B_4$.

For the spirallike case we can assume without loss of generality  that $h(0)=0$. By the Koebe distortion theorem the function $g(z)=\log\frac{h(z)}{z}$ belongs in the Bloch space. Furthermore, since $h$ is spirallike
\[h'(z)=\lambda \frac{h(z)}{z}u(z),\qquad \Re u\geq 0.\]
Herglotz representation formula \cite{POM92} yields to the existence of a finite measure $\mu$ on the unit circle such that
\[u(z)=\int_\mathbb{T}\frac{\zeta+z}{\zeta-z}d\mu(\zeta),\qquad z\in\mathbb{D},\]
and $\mu\left(\mathbb{D}\right)=\frac{1}{\lambda}$. Consequently, the derivative of the symbol $g$
is the Cauchy transform of a finite measure in the unit circle:
\[g'(z)=\frac{\lambda}{z}\left(u(z)-\frac{1}{\lambda}\right)=2\lambda\int_\mathbb{T}\frac{d\mu(\zeta)}{\zeta-z}.\]
The integration operator $T_g$ is bounded on $L^2_a$ and Theorem~\ref{tg} imply that $2\frac{r-p}{rp},$ where $r\in(-2,p)$, exists in the resolvent set of $T_g\lvert_{L^2_\alpha}$.

Therefore, by Lemma~\ref{tglem} $$\omega_1:=\left|\frac{h(z}{z}\right|^{\frac{pr}{r-p}}\in B_\infty,$$ and there exists a $\gamma>0$ such that

 \begin{equation}\label{part1}
     \int_{\mathbb{D}}\frac{\omega_1(w)}{|1-\overline{z}w|^{\gamma+2}}\,dA(w)\lesssim  \frac{\omega_1(z)}{\left(1-|z|^2\right)^\gamma},\qquad z\in\mathbb{D}.
 \end{equation}
Applying Littlewood's subordination principle in a similar manner us before, we obtain that 
 \begin{equation}\label{part2}
     \int_{\mathbb{D}}\frac{|u(w)|^r}{|1-\overline{z}w|^{4}}\,dA(w)\lesssim  \frac{|u
     (z)|^{r}}{\left(1-|z|^2\right)^2},\qquad z\in\mathbb{D}.
 \end{equation}
Let $\delta=2\frac{p}{r}+\gamma\frac{r-p}{r}>0$, Hölder's inequality yields to 
\begin{multline*}
     \int_{\mathbb{D}}\frac{\omega(w)(1-|z|^2)^{\delta}}{|1-\overline{z}w|^{\delta+2}}\,dA(w)\\\lesssim\left(\int_{\mathbb{D}}\frac{|u(w)|^r\left(1-|z|^2\right)^2}{|1-\overline{z}w|^{4}}\,dA(w)\right)^{\frac{p}{r}}\left(\int_{\mathbb{D}}\frac{\omega_1(w)\left(1-|z|^2\right)^\gamma}{|1-\overline{z}w|^{\gamma+2}}\,dA(w)\right)^{\frac{r-p}{r}} \\
  \lesssim \omega(z).
\end{multline*}

The proof follows applying again Lemma~\ref{tglem}, note that $\log h'\in\mathfrak{B}$.

\end{proof}
The two cases in the above lemma can be merged. The argument that we used when the Koenigs map is spirallike is far more general. Although, we decided to keep the first elementary argument for expository reasons.

The last ingredient that we need for the proof of Theorem~\ref{main} is an inequality similar to Schwarz-type lemmata for conformal functions. We will rely on a well-know technique of Sergei 
Shimorin, see for example \cite{SHI05} where he proved using the Löwner differential equation that every conformal self-map $\phi$ of the unit disk with $\phi(0)=0$, satisfies the following inequality:
\begin{equation}\label{shimorin}
\frac{1-|z|^2}{1-|\phi(z)|^2}\leq \left|\phi'(w)\right|\left(\frac{|z|}{|\phi(z)|}\right)^2.
\end{equation}

\begin{lemma}\label{ShimLow}
	Let $\phi:\mathbb{D}\mapsto\mathbb{D}$ be a conformal map that fixes the origin. Then, for every $p\in(0,2)$.
	\begin{equation}\label{lowner}
	\left|\phi'(z)\right|^p\geq \left(\frac{1-|z|^2}{1-|\phi(z)|^2}\right)^2\left|\frac{\phi(z)}{z}\right|^{2p}.
	\end{equation}
\end{lemma}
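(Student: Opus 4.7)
The plan is to derive the stated inequality directly from Shimorin's inequality \eqref{shimorin} combined with the classical Schwarz lemma; no further differential equation machinery is needed, since the whole point of this lemma is to convert the exponent $p$ on the factor $\frac{1-|z|^2}{1-|\phi(z)|^2}$ (which is what one would obtain by raw power-raising) into the fixed exponent $2$.

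As a first step, I would rearrange \eqref{shimorin} (interpreting $\phi'(w)$ as $\phi'(z)$, since the two arguments clearly coincide) into the equivalent lower bound
\[
|\phi'(z)|\ \geq\ \frac{1-|z|^2}{1-|\phi(z)|^2}\left|\frac{\phi(z)}{z}\right|^{2}.
\]
Both sides are non-negative, so for any $p>0$ I can raise to the $p$-th power to get
\[
|\phi'(z)|^{p}\ \geq\ \left(\frac{1-|z|^2}{1-|\phi(z)|^2}\right)^{p}\left|\frac{\phi(z)}{z}\right|^{2p}.
\]

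Next, to replace the exponent $p$ on the first factor by $2$, I would invoke the Schwarz lemma: since $\phi$ is a holomorphic self-map of $\mathbb{D}$ with $\phi(0)=0$, we have $|\phi(z)|\leq |z|$, hence $1-|z|^2\leq 1-|\phi(z)|^2$, and so
\[
0\ <\ \frac{1-|z|^2}{1-|\phi(z)|^2}\ \leq\ 1.
\]
For any number $a\in(0,1]$ the function $p\mapsto a^p$ is non-increasing, so the hypothesis $p\in(0,2)$ gives $a^{p}\geq a^{2}$. Applying this with $a=\frac{1-|z|^2}{1-|\phi(z)|^2}$ yields
\[
\left(\frac{1-|z|^2}{1-|\phi(z)|^2}\right)^{p}\ \geq\ \left(\frac{1-|z|^2}{1-|\phi(z)|^2}\right)^{2},
\]
and combining this with the previous display produces exactly \eqref{lowner}.

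There is essentially no obstacle here: the lemma is a two-line manipulation, and the only subtlety is recognising that Schwarz's lemma forces $(1-|z|^2)/(1-|\phi(z)|^2)\leq 1$, which is precisely what makes lowering the exponent from $p$ to $2$ legitimate rather than illegitimate. The restriction $p<2$ is therefore sharp for this proof strategy: if $p$ were allowed to exceed $2$, the exponent manipulation would run the other way and the inequality would no longer follow from Shimorin alone.
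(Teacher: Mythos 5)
Your proof is correct, but it takes a genuinely different route from the paper's. You treat the quoted Shimorin inequality \eqref{shimorin} (which, after fixing the evident typo $\phi'(w)\mapsto\phi'(z)$, is exactly the $p=2$ endpoint of \eqref{lowner}) as a black box, raise it to the power $p$, and then use the Schwarz lemma to lower the exponent on the factor $(1-|z|^2)/(1-|\phi(z)|^2)$ from $p$ to $2$; each step is valid, and your remark that $p\le 2$ is precisely what makes the exponent manipulation go in the right direction correctly identifies the role of the hypothesis. The paper does not argue this way: it reruns Shimorin's Löwner-chain computation for each $p\in(0,2)$, reducing to slit mappings by a density argument and showing that $t\mapsto \left|\varphi_t'(z)\right|^{-p}\left|\varphi_t(z)/z\right|^{2p}\left(1-\left|\varphi_t(z)\right|^2\right)^{-2}$ is non-increasing along the chain, the key point being that the quadratic $(4-p)x^2-2px+p$ is non-negative on $[0,1]$ for $p\in[0,2]$. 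Your argument buys brevity and avoids both the density reduction and the differential-equation bookkeeping, at the cost of relying on the cited inequality \eqref{shimorin}; the paper's argument buys self-containedness, since the same monotonicity computation at $p=2$ in fact proves \eqref{shimorin} rather than assuming it. Given that the paper itself states \eqref{shimorin} as a known result of Shimorin, your derivation is a legitimate and arguably cleaner proof of the lemma.
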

\begin{proof}
By a standard density argument, we can assume that $\phi$ is a slit mapping, meaning that $\phi(\mathbb{D})=\mathbb{D}\setminus\widehat{\gamma}$, where $\gamma$ is a smooth Jordan arc in $\mathbb{D}\setminus\{0\}$ with starting point in the unit circle.
There exists a Löwner chain (\cite{GOL69}) $\{\varphi_t\}_{0\leq t\leq T}$ such that
\begin{enumerate}[(i)]
\item $$\varphi_0(z)=z,\qquad \varphi_T(z)=\phi(z),\qquad z\in\mathbb{D}.$$
\item $$\frac{\partial}{\partial_t}\varphi_t(z)=-\varphi_t(z)\frac{1+k(t)\varphi_t(z)}{1-k(t)\varphi_t(z)},$$
where $k(t)$ is continuous with $|k(t)|=1$.
\end{enumerate}
We observe that:
\begin{align*}
\frac{\partial}{\partial_t}\log\frac{\left|\varphi'_t(z)\right|^{-p}\left|\frac{\varphi_t(z)}{z}\right|^{2p}}{\left(1-\left|\varphi_t(z)\right|^2\right)^2}&=-p\frac{\Re\left(\overline{\varphi'_t(z)}\frac{\partial}{\partial_t}\varphi'_t(z)\right)}{\left|\varphi'_t(z)\right|^2}\\
&+2p\frac{\Re\left(\overline{\varphi_t(z)}\frac{\partial}{\partial_t}\varphi_t(z)\right)}{\left|\varphi_t(z)\right|^2}+4\frac{\Re\left(\overline{\varphi_t(z)}\frac{\partial}{\partial_t}\varphi_t(z)\right)}{1-\left|\varphi_t(z)\right|^2}\\
&=\left(-p-4\frac{|\varphi_t(z)|^2}{1-|\varphi_t(z)|^2}\right)\Re \left(\frac{1+k(t)\varphi_t(z)}{1-k(t)\varphi_t(z)}\right)\\
&+p\Re\left(\frac{2k(t)\varphi_t(z)}{(1-k(t)\varphi_t(z))^2}\right)\\
&=-\left|1-k(t)\varphi_t(z)\right|^{-2}\left((4-p)\left|\varphi_t(z)\right|^2+p\right)\\
&+p\Re\left(\frac{2k(t)\varphi_t(z)}{(1-k(t)\varphi_t(z))^2}\right)\\
&\leq-\frac{(4-p)\left|\varphi_t(z)\right|^2-2p\left|\varphi_t(z)\right|+p}{\left|1-k(t)\varphi_t(z)\right|^{2}}\\
&\leq 0.
\end{align*}
Thus
\begin{equation*}
\frac{\left|\varphi'_t(z)\right|^{-p}\left|\frac{\varphi_t(z)}{z}\right|^{2p}}{\left(1-\left|\varphi_t(z)\right|^2\right)^2}\leq \frac{\left|\varphi'_0(z)\right|^{-p}\left|\frac{\varphi_0(z)}{z}\right|^{2p}}{\left(1-\left|\varphi_0(z)\right|^2\right)^2}=\left(1-|z|^2\right)^{-2}.
\end{equation*}
\end{proof}
Applying the above lemma in conjunction with the Koebe distortion theorem, we obtain:
\begin{lemma}\label{LOW}
For every conformal map $\phi:\mathbb{D}\mapsto\mathbb{D}$ and every $p\in(0,2)$, there exists a constant $C>0$ such that
\begin{equation}\label{brennaneq}
\left|\phi'(w)\right|^{\frac{p}{2}}\geq C \frac{1-|z|^2}{1-|\phi(z)|^2},\qquad z\in \mathbb{D}.
\end{equation}
\end{lemma}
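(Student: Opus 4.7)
The plan is to derive the inequality from Lemma~\ref{ShimLow} together with the Koebe growth theorem, after a Möbius reduction to the centered case. Taking square roots in Lemma~\ref{ShimLow} gives, for any conformal $\phi:\mathbb{D}\to\mathbb{D}$ with $\phi(0)=0$,
\[|\phi'(z)|^{p/2}\geq \frac{1-|z|^2}{1-|\phi(z)|^2}\left|\frac{\phi(z)}{z}\right|^p,\]
so what remains is to control the factor $|\phi(z)/z|^p$ and to handle the case $\phi(0)\neq 0$.

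If $\phi(0)=0$, then $\phi/\phi'(0)$ is a normalized conformal map, and part~(i) of the Koebe growth theorem yields
\[\left|\frac{\phi(z)}{z}\right|\geq\frac{|\phi'(0)|}{(1+|z|)^2}\geq \frac{|\phi'(0)|}{4},\]
which, combined with the previous display, gives the claim with $C=(|\phi'(0)|/4)^p$.

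For general $\phi$, let $a=\phi(0)$ and put $\psi=T_a\circ\phi$, with $T_a(w)=(a-w)/(1-\bar a w)$ the involutive disk automorphism exchanging $0$ and $a$, so that $\psi(0)=0$ and the previous paragraph applies to $\psi$. To pass back to $\phi=T_a\circ\psi$, I would use the standard Möbius identities
\[1-|T_a(w)|^2=\frac{(1-|a|^2)(1-|w|^2)}{|1-\bar a w|^2},\qquad |T_a'(w)|=\frac{1-|a|^2}{|1-\bar a w|^2},\]
together with the chain rule. The $|1-\bar a\psi(z)|$ factors then collect into a single power that is bounded above by $(1+|a|)^{p+2}$, while the remaining powers of $(1-|a|^2)$ combine into a harmless positive constant depending only on $\phi$. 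The only real obstacle is keeping this Möbius bookkeeping straight; the analytic content is supplied entirely by Lemma~\ref{ShimLow}, with Koebe's theorem playing the supporting role of turning the $|\phi(z)/z|^p$ nuisance factor into a positive constant.
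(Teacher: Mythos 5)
Your proposal is correct and follows essentially the same route as the paper: take square roots in Lemma~\ref{ShimLow}, absorb the factor $|\phi(z)/z|^p$ via the Koebe growth bound $|\phi(z)/z|\geq|\phi'(0)|/4$, and reduce the case $\phi(0)=a\neq 0$ to the centered case through the involution $T_a$, using that the resulting powers of $|1-\bar a\psi(z)|$ and $(1-|a|^2)$ are controlled by constants depending only on $\phi$. The Möbius bookkeeping you defer does close as claimed, yielding the constant $C_\psi(1-|a|^2)^{1+p/2}(1+|a|)^{-(p+2)}$.
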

\begin{proof}
Let us first assume that $\phi(0)=0$. Then, by Lemma~\ref{ShimLow} and Koebe's distortion theorem
\[\left|\phi'(z)\right|^{\frac{p}{2}}\geq \frac{1-|z|^2}{1-|\phi(z)|^2}\left|\frac{\phi(z)}{z}\right|^p\geq \left|\frac{|\phi'(0)|}{4}\right|^p\frac{1-|z|^2}{1-|\phi(z)|^2}.\]
If $\alpha=\phi(0)\neq0$ we consider the map $T_\alpha\circ\phi(z)=\frac{\alpha-\phi(z)}{1-\overline{\alpha}\phi(z)}$. By the previous case
 \[\left|\phi'(z)\right|^{\frac{p}{2}}\left|T_\alpha'\circ\phi(z)\right|^{\frac{p}{2}}\geq \left|\frac{|\phi'(0)|}{4(1-|\phi(0)|^2)}\right|^p\frac{1-|z|^2}{1-|\phi(z)|^2}\left|T_\alpha'\circ\phi(z)\right|^{-1}.\]
This completes the proof since $\left|T_\alpha'\circ\phi(z)\right|$ is bounded from above.
\end{proof}
\begin{proof}[\textbf{Proof of Theorem \ref{main}}]
We can assume that $\left\{\phi_t\right\}_{t\geq 0}$ is not a group and that $p\in(-2,-1)$.
\begin{align*}
\int\limits_{\mathbb{D}}\left|\phi_t'(w)\right|^p\,dA(w)&= C(t,p)\int\limits_{\mathbb{D}}\left|h'\circ\phi_t(w)\right|^{|p|}\left|h'(w)\right|^{p}\,dA(w)\\
&=C(t,p)\norm{C_{\phi_t}(h')}^{|p|}_{L^{|p|}(\omega)},
\end{align*}
where $\omega(z)=\left|h'(z)\right|^p$. By Lemma~\ref{bek} and Theorem~\ref{CO}, it is sufficient to prove that the composition operator $C_{\phi_t}$ is bounded on $L_\alpha^2(\omega)$. 

For an arbitrary function $f\in L_\alpha^2(\omega)$ we apply the Littlewood-Paley formula~\eqref{LP2}, yielding to:
\begin{align*}
&\norm{C_{\phi_t}(f)}_{L^2(\omega)}^2=\int\limits_{\mathbb{D}}\left|f\left(\phi_t\right)(w)\right|^2\left|h'(w)\right|^p\,dA(w)\\
&\lesssim  \left|f(\phi_t(0))\right|^2+\int\limits_{\mathbb{D}}\left|f'\left(\phi_t\right)(w)\right|^2\left|\phi_t'(w)\right|^2\left(1-|w|^2\right)^2\left|h'(w)\right|^p\,dA(w)\\
&\lesssim  \left|f(\phi_t(0))\right|^2+\int\limits_{\phi_t\left(\mathbb{D}\right)}\left|f'(w)\right|^2\left(1-|\phi_t^{-1}(w)|^2\right)^2\left|h'(\phi_t^{-1}(w))\right|^p\,dA(w).
\end{align*}
The point evaluation linear functionals are bounded since the weight $\omega$ is B\'{e}koll\'{e}-Bonami. Thus, applying again the Littlewood-Paley formula~\eqref{LP2} it is easy to see that the following inequality suffices boundedness of $C_{\phi_t}$ on $L_\alpha^2(\omega)$: 

\begin{equation*}
\left(1-|z|^2\right)^2\left|h'(z))\right|^p\lesssim \left(1-|\phi_t(z)|^2\right)^2\left|h'(\phi_t(z))\right|^p,\qquad z\in \mathbb{D},
\end{equation*}
or equivalently that
	\begin{equation}
\left|\phi_t'(z)\right|^{|p|}\geq C(\phi_t) \left(\frac{1-|z|^2}{1-|\phi_t(z)|^2}\right)^2.
\end{equation}
The proof follows from Lemma~\ref{LOW}
\end{proof}

	\bibliographystyle{amsplain-nodash} 
	\bibliography{refbre} 
\end{document}